\theoremstyle{plain}
\newtheorem{lemma}{Lemma} 
\newtheorem{thm}{Theorem}
\newtheorem{exam}{Example}
\newtheorem{defn}{Definition}
\newtheorem{rmk}[thm]{Remark}
\title{Representations of Plesken Lie algebras}
\author {P. G. Romeo and S. N. Arjun}
\address{Dept. of Mathematics, Cochin University of Science and Technology, Kochi, Kerala, INDIA.}
\email{$romeo_-parackal@yahoo.com,\, arjunsnmaths1996@gmail.com$}
\subjclass{22D20, 17B05, 17B10}
\keywords {Lie algebra, Group algebra, Plesken Lie algebra, Representations.}
\thanks{} 
\date{}
\begin{document}
	
	\begin{abstract}
		Cohen and Taylor introduced Plesken Lie algebras of finite groups and studied their  structural properties. As a further step, we will introduce Plesken Lie algebra representations, Plesken Lie algebra modules and discuss its irreducibility.
	\end{abstract}
	\maketitle
	
%\section{Introduction}\label{sec1}

Representations of Lie groups and Lie algebras is an active area of research related to 
 Lie theory which has its roots in the work of Sophus Lie, who studied certain transformation groups that are now called Lie groups. At present both Lie groups and Lie algebras have become essential to many parts of mathematics and theoretical physics. 
 The Lie algebra, being a linear object, is more immediately accessible than the Liegroup. It was Wilhelm Killing (1847–1923) who insisted that before one could classify all group actions one should begin by classifying all (finite dimensional real) Lie algebras. The gradual evolution of the ideas of Lie, Friedrich Engel (1861–1941), and Killing, made it clear that determining all simple Lie algebras was fundamental. Throughout this paper, unless otherwise stated, a Lie algebra is a finite dimensional over a field of characteristic zero. \\
 
A group algebra of a finite group $G$ is a vector space having basis consisting of group elements which also carries extra structure involving the product operation on $G$. In a sense, group algebras are the source of all you need to know about representation theory of finite groups.  A representation of a finite group $G$ on a finite dimensional complex vector space $V$ is a homomorphism $\rho : G \to GL(V)$ of $G$ to the group of automorphisms of $V$. A subrepresentation of a representation $V$ is a vector subspace $W$ of $V$ which is invariant under $G$. That is $\rho(g)(w) \in W$ for all $g \in G$ and $w \in W$. A representation $\rho$ is \textit{irreducible} if there is no proper invariant subspace $W$ of $V$. The group algebra assumes the structure of a Lie algebra via the bracket (commutator) operation.\\

W. Plseken and Arjeh M. Cohen described the construction of a Lie algebra of a finite group and call it as Plesken Lie algebra. The question that they encountered was for which groups the construction would produce a simple Lie algebra? In \cite{Plesken}, Arjeh M. Cohen and D. E. Taylor studied the structure of Plesken Lie algebras in detail by explicitely determining the groups for which the Plesken Lie algebra is simple or semisimple. In this paper we discuss the representations of Plesken Lie algebras,  Plesken Lie algebra modules and the Plesken Lie algebra representations induced from the group representations. Further we also  determine the irreducible Plesken Lie algebra representations.

\section{Preliminaries}	
	We will first go through preliminaries of Lie algebras, group algebras and Plesken Lie algebras. Also we describes some structural properties of group algebra. Further we will introduce the notion of representation of Plesken Lie algebras. We also discuss some irreducible representations of Plesken Lie algebras of some groups.\\
	
	A Lie algebra $L$ over an arbitrary field $\mathbb{F}$  is a vector space over $\mathbb{F}$ endowed with an operation called Lie bracket satisfying the following properties:
	\begin{enumerate}
		\item Bilinearity :  For $x,y,z \in L, a,b \in \mathbb{F}$
		\begin{center}
			$[ax+by, z]= a[x, z]+b[y, z]$\\
			$[x, ay+bz] = a[x,y]+b[x,z]$
		\end{center}
		\item $[x,x] =0$ for all $x \in L$
		\item Jacobi identity :\\$[x,[y,z]]+[y,[z,x]]+[z,[x,y]]=0$ for all $x,y,z \in L$  
	\end{enumerate}
	A vector subspace $K \subset L$ is a \textit{Lie subalgebra} of $L$ if $[x, y] \in K$ for all $x, y \in K$, and a subspace $I$ of a Lie algebra $L$ is called an \textit{ideal} if $[x, y] \in I$ for all $x \in L, y \in I$. A \textit{Lie algebra homomorphism} from $L$ to $L'$ is a linear map $\phi : L \to L'$ such that $\phi([x, y]) = [\phi(x), \phi(y)]$ for all $x, y \in L$.

	A group algebra $\mathbb{F}G$ of a group $G$ over $\mathbb{F}$ is a vector space with group elemensts as basis. That is,
	\begin{center}
		$\mathbb{F}G=\{ \displaystyle\sum_{i=1}^n a_ig_i: a_i \in \mathbb{F} \ \text{for all i} \}$
	\end{center}
	Here the multiplication is defined by 
	\begin{center}
		$\begin{pmatrix} \displaystyle\sum_{i=1}^n a_ig_i \end{pmatrix}
		\begin{pmatrix} \displaystyle\sum_{j=1}^nb_jg_j \end{pmatrix}=\displaystyle\sum_{i,j =1}^na_i b_j(g_ig_j) , \ \indent a_i,b_j \in \mathbb{F} $
	\end{center} 
	%The group algebra $\mathbb{F}G$ is isomorphic to the algebra of functions defined on $G$ with values in $\mathbb{F}$ which assume only a finite number of non-zero values, in this algebra multiplication is the convolution of these functions.
	
	A linear map between two group algebras ${\mathbb{F}G}$ and ${\mathbb{F}H}$ which preserves the algebra multiplication  is a \textit{homomorphism} of group  algebras.
	
	\begin{defn}(cf.\cite{Plesken}) Plesken Lie algebra $\mathcal{L}(G)$ of a  group $G$ over $\mathbb{F}$   is the linear span of elements $\hat{g} = g-{g}^{-1} \in \mathbb{F}G$ together with the Lie bracket
		\begin{center}
			$[\hat{g}, \hat{h}] = \hat{g}\hat{h} - \hat{h}\hat{g}$
		\end{center}
	\end{defn}	
	\begin{exam} 
		Consider the symmetric group $S_3$, then  
		\begin{equation*}
			\begin{split}
				L(S_3) &= \text{span} \{ \sigma - {\sigma}^{-1} : \sigma \in S_3 \}\\
				&= \{ a_1((1) - (1)) + a_2((1 \ 2) - (1 \ 2)) +a_3((1 \ 3) - (1 \ 3)) + +a_4((2 \ 3) - (2 \ 3)) \\
				& \ \indent \quad + a_5((1 \ 2 \ 3) - (1 \ 3 \ 2))+ a_6((1 \ 3 \ 2) - (1 \ 2 \ 3)) : a_i \in \mathbb{C} \}\\
				&= \{ a((1 \ 2 \ 3) - (1 \ 3 \ 2)) : a \in \mathbb{C} \}
			\end{split}
		\end{equation*} 
		is a one dimensional  Plesken Lie algebra over $\mathbb{C}$ with Lie bracket
		\begin{center}
			$[a\widehat{(1 \ 2 \ 3)}, b \widehat{(1 \ 2 \ 3)}] = 0$
		\end{center}
	\end{exam}

\section{Group algebra homomorphisms and Plesken Lie algebra homomorphisms}
Here we go through some structural properties of group algebras and Plesken Lie algebras such as  group algebra homomorphisms, group algebra representations and Plesken Lie algebra homomorphisms.

\begin{lemma}
	Let $f : G \rightarrow H$ be a group homomorphism.
	Then $\bar{f} : {\mathbb{F}G} \rightarrow {\mathbb{F}H} $ defined by 
	\begin{center}
		$ \bar{f}(\displaystyle\sum_{i}^n a_ig_i)=\displaystyle\sum_{i}^n a_if(g_i)$ 
	\end{center}			
	is a homomorphism between group algebras.
\end{lemma}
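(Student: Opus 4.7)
The plan is to check the two defining properties of a group-algebra homomorphism in order: first linearity, then preservation of the algebra product. Since $\bar{f}$ is declared by its values on the basis $\{g_i\}$ of $\mathbb{F}G$ and extended by the formula $\bar{f}(\sum a_i g_i)=\sum a_i f(g_i)$, linearity over $\mathbb{F}$ is immediate from the definition and does not require the hypothesis that $f$ is a group homomorphism; I would dispose of it in one line.

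The substantive step is the multiplicativity $\bar{f}(xy)=\bar{f}(x)\bar{f}(y)$ for $x,y\in\mathbb{F}G$. By bilinearity of the group-algebra product and of $\bar{f}$, it suffices to verify this on basis elements $g_i,g_j\in G$, where it reduces to the single identity
\begin{equation*}
\bar{f}(g_i g_j)=f(g_i g_j)=f(g_i)f(g_j)=\bar{f}(g_i)\bar{f}(g_j),
\end{equation*}
the middle equality being precisely the assumption that $f\colon G\to H$ is a group homomorphism. Having checked this on basis vectors, I would then expand a general product $\bigl(\sum_i a_i g_i\bigr)\bigl(\sum_j b_j g_j\bigr)=\sum_{i,j} a_i b_j(g_i g_j)$, apply $\bar{f}$ term by term, use the basis case, and recollect to obtain $\bigl(\sum_i a_i f(g_i)\bigr)\bigl(\sum_j b_j f(g_j)\bigr)=\bar{f}(x)\bar{f}(y)$.

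There is no real obstacle here; the statement is essentially the universal property of the free vector space with basis $G$ combined with the fact that the multiplication on $\mathbb{F}G$ is the $\mathbb{F}$-bilinear extension of the multiplication in $G$. The only thing to be a little careful about is making the reduction-to-basis argument explicit, so that the reader sees that bilinearity of the product is what lets one bootstrap from $f(g_ig_j)=f(g_i)f(g_j)$ to the full statement $\bar{f}(xy)=\bar{f}(x)\bar{f}(y)$.
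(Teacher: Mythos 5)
Your proposal is correct and follows essentially the same route as the paper: the paper also dismisses linearity as clear and then verifies multiplicativity by expanding $\bigl(\sum_i a_i g_i\bigr)\bigl(\sum_j b_j g_j\bigr)=\sum_{i,j}a_ib_j(g_ig_j)$ and applying $f(g_ig_j)=f(g_i)f(g_j)$ termwise. Your version merely makes the reduction-to-basis step more explicit, which is a presentational difference, not a mathematical one.
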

\begin{proof}
	Let $G= \{g_1, g_2,. . . , g_n \}$, then ${\mathbb{F}G} = 	\{ \displaystyle\sum_{i=1}^n a_ig_i: a_i \in \mathbb{F} \ \text{for all i} \}$. Clearly $\bar{f}$ is linear and for  $\alpha = \displaystyle\sum_{i=1}^n a_ig_i, \, \beta = \displaystyle\sum_{j=1}^n b_jg_j \in {\mathbb{F}G}$,
	
	\begin{equation*}
		\begin{split}
			\bar{f}(\displaystyle\sum_{i=1}^n a_ig_i\displaystyle\sum_{j} b_jg_j) 
			= \bar{f}(\displaystyle\sum_{i,j=1}^n a_ib_jg_ig_j)
			= \displaystyle\sum_{i,j=1}^n a_ib_jf(g_i)f(g_j) 
			%& \text{\quad  \indent \ ( since $f$ is a group homomorphism)}\\
			= \displaystyle\sum_{i=1}^n a_if(g_i)\displaystyle\sum_{j=1}^n b_jf(g_j)
		\end{split}		
	\end{equation*}	
	hence, $\bar{f}$ is a homomorphism between group algebras.
\end{proof}	

However, it is not necessary that any group algebra homomorphisms induced by a group homomorphism as seen in the following example.
\begin{exam}
	Consider the group $G = \{e, \sigma \}$ where $\sigma^2 = e$. Define $f : \mathbb{C}G \to \mathbb{C}G$ by
	\begin{center}
		$f(e) = e$ and $f(\sigma) = - \sigma$ with $f$ is linear
	\end{center}
	Then $f$ is a group algebra homomorphism. But $f$ is not induced from any group homomorphism.
\end{exam}

A representation of a group algebra $\mathbb{F}G$ is a linear map $\phi : \mathbb{F}G \to \mathfrak{gl}(V)$ ( where $\mathfrak{gl}(V)$ is the set of all endomorphisms on $V$)  such that $\phi$ is an algebra homomorphism. A representation of a group $G$ gives rise to a representation of the group algebra $\mathbb{F}G$ as follows.
\begin{defn}
	Given a representation $\rho : G \to GL(V)$ of a group $G$. Then $\phi : \mathbb{F}G \to \mathfrak{gl}(V)$ defined by
	\begin{center}
		$\phi(\displaystyle\sum_{i=1}^n a_ig_i) = \displaystyle\sum_{i=1}^n a_i\rho(g_i)$
	\end{center}
	is a representation of $\mathbb{F}G$.
\end{defn} 

A \textit{subrepresentation} of a representation $\phi : \mathbb{F}G \to \mathfrak{gl}(V)$ is a map $\phi' : \mathbb{F}G \to \mathfrak{gl}(W)$ where $W$ is a  vector subspace  $V$ which is invariant under $\mathbb{F}G$. A representation $\phi$ is \textit{irreducible} if there is no proper $\mathbb{F}G$-invariant subspace $W$ of $V$. That is, there is no proper invariant subspace $W$ such that $\phi(x)(w) \in W$ for all $w \in W$ and $x \in \mathbb{F}G$.
\begin{thm}
	If $\rho : G \to GL(V)$ is an irreducible representation of $G$, then the representation  $\phi : \mathbb{F}G \to \mathfrak{gl}(V)$ defined by
	\begin{center}
		$\phi(\displaystyle\sum_{i=1}^n a_ig_i) = \displaystyle\sum_{i=1}^n a_i\rho(g_i)$
	\end{center} is also irreducible.
\end{thm}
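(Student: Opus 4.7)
The plan is to prove the statement by contrapositive: assume that $\phi$ has a proper nonzero $\mathbb{F}G$-invariant subspace $W \subseteq V$, and then derive that $W$ is already a proper nonzero $G$-invariant subspace under $\rho$, contradicting the hypothesis that $\rho$ is irreducible.

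First I would observe the key embedding: each group element $g \in G$ corresponds to the basis vector $1 \cdot g \in \mathbb{F}G$, and the definition of $\phi$ gives $\phi(g) = \rho(g)$ on this element. Therefore, if $W$ is a subspace satisfying $\phi(x)(W) \subseteq W$ for all $x \in \mathbb{F}G$, then in particular $\rho(g)(W) = \phi(g)(W) \subseteq W$ for every $g \in G$. Since $W$ is assumed proper and nonzero, this directly produces a proper nonzero $G$-invariant subspace, contradicting the irreducibility of $\rho$.

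For completeness, I would also note the (easy) converse direction, which explains why the argument is tight: any $\rho$-invariant subspace $W$ is automatically $\phi$-invariant, because for any $x = \sum_{i=1}^n a_i g_i$ and $w \in W$, linearity gives $\phi(x)(w) = \sum_{i=1}^n a_i \rho(g_i)(w)$, and each summand lies in $W$ since $W$ is $\rho$-invariant and closed under scalar multiplication and addition. Consequently, the lattice of $\phi$-invariant subspaces of $V$ coincides with the lattice of $\rho$-invariant subspaces, so irreducibility is preserved (and indeed equivalent) under the passage $\rho \leadsto \phi$.

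There is no real obstacle here; the content of the theorem is essentially definitional, resting on the fact that $G$ generates $\mathbb{F}G$ as a vector space and that $\phi$ was defined to extend $\rho$ linearly. The only thing one must be careful about is to invoke linearity in both directions: once to reduce $\phi$-invariance on arbitrary $\sum a_i g_i$ to $\rho$-invariance on each $g_i$, and once (implicitly) to ensure the invariant subspace $W$ under consideration is genuinely a vector subspace, so that $\rho$-invariance on generators propagates to $\phi$-invariance on all of $\mathbb{F}G$.
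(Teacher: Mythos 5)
Your proposal is correct and follows essentially the same route as the paper: both arguments specialize a $\phi$-invariant subspace $W$ to the basis elements $g \in G$ (where $\phi(g)=\rho(g)$) to conclude that $W$ is $\rho$-invariant, contradicting irreducibility of $\rho$. Your version is if anything slightly more explicit than the paper's at the specialization step, and the added remark about the converse inclusion of invariant-subspace lattices is a harmless bonus not present in the paper.
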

\begin{proof}
	Since $\rho : G \to GL(V)$ is an irreducible representation, $V$ has no proper $G$-invariant subspace. Let $\phi : \mathbb{F}G \to \mathfrak{gl}(V)$ be the corresponding representation of $\mathbb{F}G$. Also let $W$ be an invariant subspace of $V$ under the action $\phi$. That is $\phi(x)(w) \in W$ for all $w \in W$ and $x \in \mathbb{F}G$. 
	\begin{equation*}
		\begin{split}
			\phi(x)(w) &= \phi(\displaystyle\sum_{i}^n a_ig_i)(w)\\
			&= \displaystyle\sum_{i}^n a_i\rho(g_i)(w)
		\end{split}
	\end{equation*}
	Inparticular, $\rho(g_i)(w) \in W$ for all $w \in W$ and $g_i \in G$. That is $W$ is an invariant subspace of $V$ under the action $\rho$, which is a contradiction since $\rho$ is irreducible. Hence there is no invariant subspace $W$ of $V$ under the action $\phi$.
\end{proof}
The following example shows that the group algebra representations of the irreducible representations of the symmetric group $S_3$ are also irreducible.
\begin{exam}
	Consider the symmetric group $S_3$ and its irreducible representations $\rho_1, \rho_2$ and $\rho_3$ where $\rho_1$ is the trivial representation with degree 1, $\rho_2$ is the alternating representation with degree 1 and $\rho_3$ is the standard representation with degree 2. Also,
	\begin{equation*}
		\begin{split}
			\rho_1 &: \sigma \mapsto [1] \text{ for all } \sigma \in S_3\\
			\rho_2 &: \text{even permutations maps to [1] and odd permutations maps to [-1]}\\
			\rho_3&: (1\ 2\ 3) \mapsto \begin{bmatrix} \frac{-1}{2}& \frac{\sqrt{3}}{2} \\ \frac{\sqrt{3}}{2}&\frac{-1}{2}\end{bmatrix}, (1\ 2) \mapsto \begin{bmatrix}
				1&0\\0&-1
			\end{bmatrix}
		\end{split}
	\end{equation*}
	Then from \textit{Theorem 2}, the group algebra representations are : \\
	$\phi_1 : \mathbb{R}S_3 \to \mathfrak{gl}(\mathbb{R}) $ given by
	\begin{center}
		$\phi_1(a_1(1) + a_2(1\ 2) + a_3(2\ 3) + a_4(1\ 3) +a_5(1\ 2\ 3) + a_6(1\ 3\ 2)) = (a_1+a_2+a_3+a_4+a_5+a_6)\begin{bmatrix}
			1
		\end{bmatrix}$
	\end{center}
	$\phi_1 : \mathbb{R}S_3 \to \mathfrak{gl}(\mathbb{R}) $ given by
	\begin{center}
		$\phi_2(a_1(1) + a_2(1\ 2) + a_3(2\ 3) + a_4(1\ 3) +a_5(1\ 2\ 3) + a_6(1\ 3\ 2)) = (a_1+a_5+a_6)[1] + (a_2+a_3+a_4)[-1]$
	\end{center}
	$\phi_3 : \mathbb{R}S_3 \to \mathfrak{gl}(\mathbb{R}^2) $ given by
	\begin{center}
		$\phi_2(a_1(1) + a_2(1\ 2) + a_3(2\ 3) + a_4(1\ 3) +a_5(1\ 2\ 3) + a_6(1\ 3\ 2)) = a_1\begin{bmatrix}
			1&0\\0&1
		\end{bmatrix} +a_2 \begin{bmatrix}
			1&0\\0&-1
		\end{bmatrix} +a_3 \begin{bmatrix}
			\frac{-1}{2}&\frac{-\sqrt{3}}{2} \\ \frac{-\sqrt{3}}{2} & \frac{1}{2}
		\end{bmatrix} +a_4 \begin{bmatrix}
			\frac{-1}{2}&\frac{\sqrt{3}}{2} \\ \frac{\sqrt{3}}{2} & \frac{1}{2}
		\end{bmatrix} +a_5 \begin{bmatrix}
			\frac{-1}{2}&\frac{-\sqrt{3}}{2} \\ \frac{\sqrt{3}}{2} & \frac{-1}{2}
		\end{bmatrix} +a_6 \begin{bmatrix}
			\frac{-1}{2}&\frac{\sqrt{3}}{2} \\ \frac{-\sqrt{3}}{2} & \frac{-1}{2}
		\end{bmatrix}$
	\end{center}\vspace{0.1in}
	Since $\phi_1$ amd $\phi_2$ are of degree 1, they are irreducible.  To prove $\phi_3$ is irreducible, let $W$ be a proper  invariant subspace of $\mathbb{R}^2$. Then $W = span\{(\alpha, \beta)\}$ for some $(\alpha, \beta) \in \mathbb{R}^2$. Since $W$ is invariant, $\phi_3(x)(w) \in W$ for all $x \in \mathbb{R}S_3$ and $w \in W$.
	That is, $\phi(\displaystyle \sum_{i=1}^na_i \sigma_i)(w) = k(\alpha, \beta)$ for some $k \in \mathbb{R}$.  From computations we obtained,
	\begin{equation*}
		\begin{split}
			\alpha(a_1+a_2-\frac{a_3}{2} - \frac{a_5}{2} - \frac{a_6}{2}) +\frac{ \sqrt{3}\beta}{2}(a_3 + a_4 - a_5-a_6) &= k \alpha \\
			\frac{\sqrt{3}\alpha}{2}(a_4-a_3+a_5-a_6) + \beta(a_1 - a_2+\frac{a_3}{2} +\frac{a_4}{2} - \frac{a_5}{2} -\frac{a_6}{2}) &= k \beta
		\end{split}
	\end{equation*}
	This is true for every $x \in \mathbb{R}S_3$ and $w \in W$. Thus take $x = \frac{2}{\sqrt{3}} (2\ 3)$, then it can be seen that $\alpha= \beta =0$. Thus $W = 0$ and hence $\phi_3$ is irreducible.
\end{exam}
\begin{defn}	
	Let $G$ be a finite group and $\mathbb{F}$ be a field. The group algebra $\mathbb{F}G$ over $\mathbb{F}$ with the Lie bracket $[\ , \ ] : \mathbb{F}G \times \mathbb{F}G \rightarrow \mathbb{F}G$ defined by  
	\begin{center}
		$[ \alpha  ,  \beta   ] = \alpha\beta  - \beta \alpha$ where $\alpha=\displaystyle\sum_{i} a_ig_i ,   \beta=\displaystyle\sum_{i} b_ig_i $ in $\mathbb{F}G$
	\end{center} 
	is a Lie algebra, denote it by $L_{\mathbb{F}G}$ and is called the \textit{Group Lie algebra}.
\end{defn}	
\par A linear map between two group Lie algebras $L_{\mathbb{F}G}$ and $L_{\mathbb{F}H}$ which preserves the Lie bracket  is a \textit{homomorphism} of group Lie algebras. 

The following lemmas are immediately follow from the definitions.

\begin{lemma}
	Let $f : G \rightarrow H$ be a group homomorphism.
	Then $\bar{f} : L_{\mathbb{F}G} \rightarrow L_{\mathbb{F}H} $ defined by 
	\begin{center}
		$\bar{f}(\displaystyle\sum_{i=1}^n a_ig_i)=\displaystyle\sum_{i=1}^n a_if(g_i)$ 
	\end{center}			
	is a homomorphism between group Lie algebras.
\end{lemma}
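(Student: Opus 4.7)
The plan is to piggyback on the earlier group algebra homomorphism lemma (Lemma~1 in the excerpt) and exploit the fact that the Lie bracket on $L_{\mathbb{F}G}$ is defined purely through the associative multiplication of $\mathbb{F}G$. In other words, as a vector space $L_{\mathbb{F}G}$ coincides with $\mathbb{F}G$, and the bracket is the commutator. So any linear map that already respects the associative product of the group algebras will automatically respect the commutator.

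Concretely, I would proceed in three short steps. First, observe that $\bar{f}$ as defined is just the same underlying linear map that appeared in Lemma~1; linearity is immediate from the formula $\bar{f}(\sum a_i g_i) = \sum a_i f(g_i)$. Second, invoke Lemma~1 to conclude that $\bar{f}(\alpha\beta) = \bar{f}(\alpha)\bar{f}(\beta)$ for all $\alpha,\beta \in \mathbb{F}G$, viewed as elements of the underlying associative algebra. Third, compute
\begin{equation*}
\bar{f}([\alpha,\beta]) = \bar{f}(\alpha\beta - \beta\alpha) = \bar{f}(\alpha\beta) - \bar{f}(\beta\alpha) = \bar{f}(\alpha)\bar{f}(\beta) - \bar{f}(\beta)\bar{f}(\alpha) = [\bar{f}(\alpha), \bar{f}(\beta)],
\end{equation*}
using linearity in the first equality, Lemma~1 in the third, and the definition of the bracket on $L_{\mathbb{F}H}$ in the last.

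There is no genuine obstacle here, since the statement is essentially a direct corollary of Lemma~1: the passage from associative algebra to Lie algebra via commutator is functorial, and any associative algebra homomorphism is automatically a Lie algebra homomorphism for the commutator bracket. The only thing worth flagging explicitly in the write-up is that $L_{\mathbb{F}G}$ and $\mathbb{F}G$ share the same underlying vector space, so the linearity established in Lemma~1 transfers without modification. This is why the paper is justified in saying ``the following lemmas are immediately follow from the definitions.''
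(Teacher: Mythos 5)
Your proposal is correct and follows essentially the same route as the paper: both reduce the claim to $\bar{f}(\alpha\beta-\beta\alpha)=\bar{f}(\alpha)\bar{f}(\beta)-\bar{f}(\beta)\bar{f}(\alpha)$ using linearity and the multiplicativity of $\bar{f}$. The only difference is organizational --- you cite the earlier group algebra lemma for multiplicativity, while the paper re-expands the sums $\sum_{i,j} a_i b_j f(g_i)f(g_j)$ explicitly inside this proof --- and your version is arguably the cleaner write-up.
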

\begin{proof}
	Let $G= \{g_1, g_2,. . . , g_n \}$, then $L_{\mathbb{F}G} = 	\{ \displaystyle\sum_{i=1}^n a_ig_i: a_i \in \mathbb{F} \ \text{for all i} \}$. Clearly $\bar{f}$ is linear and for  $\alpha = \displaystyle\sum_{i=1}^n a_ig_i, \, \beta = \displaystyle\sum_{j=1}^n b_jg_j \in L_{\mathbb{F}G}$,
	
	\begin{equation*}
		\begin{split}
			\bar{f}([\alpha, \beta])&=	\bar{f}(\alpha\beta - \beta\alpha)\\
			%&=\bar{f}(\displaystyle\sum_{i} a_ig_i\displaystyle\sum_{j} b_jg_j - \displaystyle\sum_{j} b_jg_j\displaystyle\sum_{i} a_ig_i) \\
			&= \bar{f}(\displaystyle\sum_{i,j=1}^n a_ib_jg_ig_j - \displaystyle\sum_{j,i=1}^n b_ja_ig_jg_i)\\
			%&=\displaystyle\sum_{i,j} a_ib_jf(g_ig_j) - \displaystyle\sum_{j,i} b_ja_if(g_jg_i)\\
			&= \displaystyle\sum_{i,j=1}^n a_ib_jf(g_i)f(g_j) - \displaystyle\sum_{j,i=1}^n b_ja_if(g_j)f(g_i)\\
			%& \text{\quad  \indent \ ( since $f$ is a group homomorphism)}\\
			&= \displaystyle\sum_{i=1}^n a_if(g_i)\displaystyle\sum_{j=1}^n b_jf(g_j) - \displaystyle\sum_{j=1}^n b_jf(g_j)\displaystyle\sum_{i=1}^n a_if(g_i) \\
			%&= \bar{f}(\alpha)\bar{f}(\beta) - \bar{f}(\beta)\bar{f}(\alpha) \\
			&= [\bar{f}(\alpha), \bar{f}(\beta)]
		\end{split}		
	\end{equation*}	
	hence, $\bar{f}$ is a homomorphism between Lie algebras of group algebras.
\end{proof}	
Clearly the Plesken Lie algebra $\mathcal{L}(G)$ is a proper subset of $L_{\mathbb{F}G}$ and moreover, it is a Lie subalgebra of $L_{\mathbb{F}G}$.

A linear map between two Plesken Lie algebras $\mathcal{L}(G)$ and $\mathcal{L}(H)$ is a \textit{Plesken Lie algebra homomorphism} if it preserves the Lie bracket.
\begin{exam}
	Consider the Plesken Lie algebras $\mathcal{L}(S_3) = \{\alpha((1\ 2\ 3) - (1\ 3\ 2)) : \alpha \in \mathbb{C}\}$  of $S_3$  over $\mathbb{C}$ and $\mathcal{L}(D_4) = \{\alpha(a - a^3) : \alpha \in \mathbb{C}\}$ of  $D_4 = <a, b : a^4 = b^2 =e, aba = b^{-1} >$ over $\mathbb{C}$. Then $\hat{f} : \mathcal{L}(S_3) \to \mathcal{L}(D_4)$ defined by
	\begin{center}
		$\hat{f}(\alpha((1\ 2\ 3) - (1\ 3\ 2))) = \alpha(a - a^3)$
	\end{center}
	is a Plesken Lie algebra homomorphism.
\end{exam}
\begin{lemma}
	Let $f : G \rightarrow H$ be a group homomorphism. Then $\hat{f} : \mathcal{L}(G) \rightarrow \mathcal{L}(H)$ defined by
	\begin{center}
		$\hat{f}(\displaystyle \sum_{i=1}^n a_i \hat{g_i}) = \displaystyle \sum_{i=1}^n a_i \widehat{f(g_i)}$
	\end{center}
	is a Plesken Lie algebra homomorphism. Further, $\hat{f}$ is actually the restriction of the homomorphism of the group Lie algebras $\bar{f} : L_{\mathbb{F}G} \to L_{\mathbb{F}H}$(defined in \textit{Lemma 3}) to $\mathcal{L}(G)$. That is, 
	\begin{center}
		$\hat{f} = \bar{f} \arrowvert_{\mathcal{L}(G)}$
	\end{center}
\end{lemma}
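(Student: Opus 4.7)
The plan is to prove both assertions simultaneously by first establishing the restriction identity $\hat{f}=\bar{f}\vert_{\mathcal{L}(G)}$, and then using Lemma 3 to deduce the bracket-preserving property for free. The only genuine computation is to check that $\bar{f}$ sends the basic element $\hat{g}=g-g^{-1}$ to $\widehat{f(g)}$; everything else is inherited from $\bar{f}$.

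First, I would take an arbitrary basic element $\hat{g}\in\mathcal{L}(G)$ and compute, using linearity of $\bar{f}$ together with the fact that $f$ is a group homomorphism (so $f(g^{-1})=f(g)^{-1}$):
\begin{equation*}
\bar{f}(\hat{g})=\bar{f}(g-g^{-1})=f(g)-f(g^{-1})=f(g)-f(g)^{-1}=\widehat{f(g)}.
\end{equation*}
Extending by linearity, $\bar{f}\bigl(\sum_{i}a_i\hat{g_i}\bigr)=\sum_{i}a_i\widehat{f(g_i)}$, which is exactly the defining formula of $\hat{f}$. This simultaneously shows that $\bar{f}$ carries $\mathcal{L}(G)$ into $\mathcal{L}(H)$, that $\hat{f}$ is well-defined as a map $\mathcal{L}(G)\to\mathcal{L}(H)$, and that $\hat{f}=\bar{f}\vert_{\mathcal{L}(G)}$, proving the second assertion.

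For the first assertion, I would then invoke the paragraph immediately preceding the lemma, which observes that $\mathcal{L}(G)$ is a Lie subalgebra of $L_{\mathbb{F}G}$, together with Lemma 3, which tells us that $\bar{f}:L_{\mathbb{F}G}\to L_{\mathbb{F}H}$ is a homomorphism of group Lie algebras. Restricting a Lie algebra homomorphism to a Lie subalgebra again gives a Lie algebra homomorphism, and we have just shown the image lands in $\mathcal{L}(H)$; therefore $\hat{f}$ is a Plesken Lie algebra homomorphism.

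The only subtlety, and it is not really an obstacle, is ensuring that defining $\hat{f}$ on the spanning set $\{\hat{g}\}_{g\in G}$ is consistent with the linear dependencies among these elements (for example $\widehat{g^{-1}}=-\hat{g}$, and $\hat{g}=0$ whenever $g=g^{-1}$). This consistency is automatic once $\hat{f}$ is realized as the restriction of the genuinely well-defined map $\bar{f}$ on all of $\mathbb{F}G$, which is why I would organize the proof so that the restriction identity is verified before, rather than after, the homomorphism claim.
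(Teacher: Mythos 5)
Your proof is correct, but it is organized differently from the paper's. The paper disposes of this lemma in one line --- ``the proof is similar as in Lemma 3'' --- which implicitly means redoing the commutator computation $\hat{f}([\alpha,\beta])=[\hat{f}(\alpha),\hat{f}(\beta)]$ directly on elements of $\mathcal{L}(G)$. You instead prove the \emph{second} assertion first: the single computation $\bar{f}(g-g^{-1})=f(g)-f(g)^{-1}=\widehat{f(g)}$ identifies $\hat{f}$ with $\bar{f}\vert_{\mathcal{L}(G)}$, and then the homomorphism property is inherited from Lemma 3 because $\mathcal{L}(G)$ is a Lie subalgebra of $L_{\mathbb{F}G}$ (with the same commutator bracket) and you have checked the image lands in $\mathcal{L}(H)$. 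Your route buys two things the paper's one-liner does not: it actually proves the restriction identity rather than leaving it as an unverified afterthought, and it settles the well-definedness of $\hat{f}$ on the spanning set $\{\hat{g}\}$, which is a real issue since these elements satisfy relations such as $\widehat{g^{-1}}=-\hat{g}$ and $\hat{g}=0$ for involutions --- a point the paper never addresses. The paper's approach, by contrast, is shorter on the page only because it delegates everything to an analogy. Your version is the more complete argument.
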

\begin{proof}
	The proof is similar as in \textit{Lemma 3}.
\end{proof}

\section{Plesken Lie algebra representations}
Next we proceed to define representation of a Plesken Lie algebra and discuss some of its properties. We have already described the representations of group algebras and it is seen that when the representations of group is irreducible so is the representation of group algebras.
\begin{defn}
	A representation of a Plesken Lie algeba $\mathcal{L}(G)$ is a linear map $\phi : \mathcal{L}(G) \to \mathfrak{gl}(V)$ (where $V$ is a vector space over $\mathbb{F}$) such that $\phi$ is a Lie algebra homomorphism.
\end{defn}
\par The following theorem shows that if we have a representation of a group $G$, then we can find a representation of the Plesken Lie algebra $\mathcal{L}(G)$.
\begin{lemma}
	
	If $\rho : G \to GL(V)$ is a representation of $G$ on $V$, then $\psi : \mathcal{L}(G) \to \mathfrak{gl}(V)$ defined by
	\begin{center}
		
		$\psi(\displaystyle \sum_{i=1}^n a_i \hat{g_i}) = \displaystyle \sum_{i=1}^n a_i \widehat{\rho(g_i)}$  for $\displaystyle \sum_{i=1}^n a_i \hat{g_i} \in \mathcal{L}(G)$
	\end{center}
	is a representation of the Plesken Lie algebra $\mathcal{L}(G)$.
\end{lemma}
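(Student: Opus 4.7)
The plan is to establish two facts: that $\psi$ is a well-defined linear map on $\mathcal{L}(G)$, and that it preserves the Lie bracket.

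For well-definedness, I would first note that the generators $\hat{g}$ of $\mathcal{L}(G)$ satisfy the relations $\hat{e} = 0$ and $\widehat{g^{-1}} = -\hat{g}$, and these (together with their linear consequences) are the only relations among the $\hat{g}$. Because $\rho$ is a group homomorphism, the target quantities $\widehat{\rho(g)} := \rho(g) - \rho(g)^{-1}$ satisfy the analogous relations: $\widehat{\rho(e)} = I - I = 0$, and $\widehat{\rho(g^{-1})} = \rho(g)^{-1} - \rho(g) = -\widehat{\rho(g)}$. Hence the rule $\hat{g} \mapsto \widehat{\rho(g)}$ descends consistently to a linear map $\psi$ on all of $\mathcal{L}(G)$.

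For bracket preservation, by bilinearity it suffices to verify $\psi([\hat{g}, \hat{h}]) = [\psi(\hat{g}), \psi(\hat{h})]$ for arbitrary $g, h \in G$. I would expand the product $(g - g^{-1})(h - h^{-1}) - (h - h^{-1})(g - g^{-1})$ in $\mathbb{F}G$ into eight group-element summands, and regroup them into hat-pairs using the identities $(gh)^{-1} = h^{-1}g^{-1}$, $(gh^{-1})^{-1} = hg^{-1}$, and so on. This yields an expression of the form
\[
[\hat{g}, \hat{h}] = \widehat{gh} - \widehat{gh^{-1}} - \widehat{g^{-1}h} - \widehat{hg}.
\]
Applying $\psi$ and invoking $\rho(gh) = \rho(g)\rho(h)$ and $\rho(g^{-1}) = \rho(g)^{-1}$ converts each term into the corresponding hat of a product of $\rho(g)^{\pm 1}$ and $\rho(h)^{\pm 1}$. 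Finally, performing the identical eight-term expansion on $[\widehat{\rho(g)}, \widehat{\rho(h)}] = (\rho(g) - \rho(g)^{-1})(\rho(h) - \rho(h)^{-1}) - (\rho(h) - \rho(h)^{-1})(\rho(g) - \rho(g)^{-1})$ inside $\mathfrak{gl}(V)$ produces the same four-term hat-expression, matching the two sides.

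The main obstacle is purely bookkeeping: keeping straight the eight summands and their inverses under the hat operation, and confirming that the regrouping into four hat-pairs is the same before and after applying $\rho$. The conceptual point, however, is clean: the hat construction is defined in exactly the same algebraic fashion on group elements and on invertible operators, so a group homomorphism transfers the group-algebra computation term-by-term to its operator analogue, forcing $\psi$ to be a Lie algebra homomorphism.
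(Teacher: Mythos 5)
Your proof is correct, but it takes a more self-contained, computational route than the paper. The paper derives the result as a corollary of its Lemmas 3 and 4: the linear extension of $\rho$ to the group algebra $\mathbb{F}G$ is an algebra homomorphism into $\mathfrak{gl}(V)$, hence preserves commutators, and $\psi$ is its restriction to the Lie subalgebra $\mathcal{L}(G)$; well-definedness is dispatched in one line by noting that $\widehat{\rho(g)}=\rho(g)-\rho(g^{-1})$ lies in $\mathfrak{gl}(V)$. You instead work intrinsically inside $\mathcal{L}(G)$: you justify well-definedness by identifying the complete set of linear relations among the spanning vectors $\hat{g}$ (namely $\hat{e}=0$ and $\widehat{g^{-1}}=-\hat{g}$, hence $\hat{g}=0$ for involutions --- correct, since choosing one representative from each pair $\{g,g^{-1}\}$ with $g^{2}\neq e$ gives a basis) and checking that the targets $\widehat{\rho(g)}$ satisfy the same relations; you then verify the bracket identity on generators via the structure-constant formula $[\hat{g},\hat{h}]=\widehat{gh}-\widehat{gh^{-1}}-\widehat{g^{-1}h}-\widehat{hg}$, which checks out (all eight group-algebra terms are accounted for, and the identical regrouping applies verbatim to $\rho(g)^{\pm 1},\rho(h)^{\pm 1}$ in $\mathfrak{gl}(V)$). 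What your route buys is a genuinely careful treatment of well-definedness --- the paper defines $\psi$ by a formula on non-independent spanning vectors and never verifies consistency --- and independence from the ambient group-algebra map; what the paper's route buys is brevity, since an algebra homomorphism automatically intertwines commutators with no eight-term bookkeeping. (The paper's appeal to its Lemma 4 is also slightly off target, since $GL(V)$ is not a finite group and the codomain is $\mathfrak{gl}(V)$ rather than a Plesken Lie algebra of a group; your direct verification sidesteps that issue.)
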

\begin{proof}
	Since $\rho : G \to GL(V)$ is a repesentation of $G$, $\rho(g_i)$ and $\rho({g_i}^{-1})$ are automorphisms on $V$. Then 
	\begin{center}
		
		$\widehat{\rho(g_i)} =( \rho(g_i) - {\rho(g_i)}^{-1}) = \rho(g_i) - \rho({g_i}^{-1})$.
	\end{center}
	is an endomorphism on $V$. Thus $\psi$ is defined.\\
	Clearly $\psi$ is linear. By $Lemma \ 4$, $\psi$ is a homomorphism from  $\mathcal{L}(G)$ to $\mathfrak{gl}(V)$ and hence $\psi$ is a representation of $\mathcal{L}(G)$.
	
\end{proof}
\par A Plesken Lie algebra representation $\psi : \mathcal{L}(G) \to \mathfrak{gl}(V)$ is \textit{irreducible} if there is no proper $\mathcal{L}(G)$-invariant subspace $W$ of $V$.

The following theorem states that the Plesken Lie algebra representation corresponding to a reducible group representation is reducible.
\begin{thm}
	If $\rho : G \to GL(V)$ is a reducible representation of a group $G$, then the Plesken Lie algebra representation $\psi: \mathcal{L}(G) \to \mathfrak{gl}(V)$ of $\mathcal{L}(G)$ defined by
	\begin{center}
		$\psi(\displaystyle \sum_{i=1}^n a_i \hat{g_i}) = \displaystyle \sum_{i=1}^n a_i \widehat{\rho(g_i)}$
	\end{center} is also reducible.
\end{thm}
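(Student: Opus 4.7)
The plan is to transport a $G$-invariant subspace of $V$ directly over to a $\mathcal{L}(G)$-invariant subspace, using the fact that invariance under $G$ automatically implies invariance under both $\rho(g)$ and $\rho(g^{-1})$, and hence under their difference $\widehat{\rho(g)}$.

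First I would unpack the hypothesis: since $\rho : G \to GL(V)$ is reducible, there exists a proper nonzero subspace $W \subset V$ with $\rho(g)(w) \in W$ for every $g \in G$ and every $w \in W$. I claim the same $W$ works for $\psi$. Indeed, $G$-invariance of $W$ under $\rho$ means $\rho(g)(W) \subseteq W$ for \emph{all} $g \in G$, and in particular for $g^{-1}$, so $\rho(g^{-1})(W) \subseteq W$ as well. Therefore, for any generator $\hat{g} = g - g^{-1}$ of $\mathcal{L}(G)$ and any $w \in W$,
\begin{equation*}
\psi(\hat{g})(w) \;=\; \widehat{\rho(g)}(w) \;=\; \rho(g)(w) - \rho(g^{-1})(w) \;\in\; W.
\end{equation*}

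Next I would extend this from the generators to an arbitrary element $x = \sum_{i=1}^n a_i \hat{g_i} \in \mathcal{L}(G)$ by linearity of $\psi$: each summand $a_i\,\psi(\hat{g_i})(w)$ lies in $W$, so their sum does too. This shows $\psi(x)(w) \in W$ for every $x \in \mathcal{L}(G)$ and every $w \in W$, i.e.\ $W$ is an $\mathcal{L}(G)$-invariant subspace of $V$ under $\psi$.

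Finally, since $W$ was already a \emph{proper} nonzero subspace of $V$ by the reducibility of $\rho$, we have exhibited a proper nontrivial $\mathcal{L}(G)$-invariant subspace, so $\psi$ is reducible by definition. There is no real obstacle here: the argument is essentially a one-line observation that $\widehat{\rho(g)} = \rho(g) - \rho(g^{-1})$ preserves whatever subspace is preserved by all of $\rho(G)$. The only thing worth flagging is that one does \emph{not} need $W$ to be closed under the Lie bracket inside $\mathcal{L}(G)$ — invariance is a statement about the action on $V$, not about the algebraic structure of $\mathcal{L}(G)$ itself — so the proof is strictly simpler than its group-algebra analogue (Theorem~2 of the paper) and requires no contradiction argument.
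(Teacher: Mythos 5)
Your proof is correct and follows essentially the same route as the paper: both take the proper $G$-invariant subspace $W$ furnished by the reducibility of $\rho$, observe that $\rho(g)(w)$ and $\rho(g^{-1})(w)$ both lie in $W$, and conclude by linearity that $\psi(\hat{x})(w)\in W$ for all $\hat{x}\in\mathcal{L}(G)$. Your write-up is in fact slightly cleaner, since the paper's displayed computation contains a small typographical slip in expanding $\widehat{\rho(g_i)}$.
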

\begin{proof}
	Since $\rho : G \to GL(V)$ is a reducible representation, $V$ has a proper invariant subspace.  That is, there is a subspace $W$ of $V$ such that $\rho(g)w \in W$ for all $g \in G$ and $w \in W$. We claim that $W$ is also invariant under $\psi$.  For let  $ \hat{x}= \displaystyle \sum_{i=1}^na_i\hat{g_i} \in \mathcal{L}(G), w \in W $,
	\begin{equation*}
		\begin{split}
			\psi(\hat{x})(w) =  \psi(\displaystyle \sum_{i=1}^na_i\hat{g_i})(w)
			= \displaystyle \sum_{i=1}^na_i\widehat{\rho(g_i)}(w)
			= \displaystyle \sum_{i=1}^na_i(\widehat{\rho(g_i)}(w) -\widehat{\rho({g_i}^{-1})}(w) )
		\end{split}
	\end{equation*}
	Since $\rho(g_i)(w), \rho(g_i^{-1})(w) \in W$ and $W$ is a subspace, $\widehat{\rho(g_i)}(w) -\widehat{\rho({g_i}^{-1})}(w)\in W$. Thus $\psi(\hat{x})w \in W$ for all $\hat{x} \in \mathcal{L}(G)$ and $w \in W$.
\end{proof}

\par Very often if $\rho$ is an irreducible representation of a group $G$, then $\psi$ is also an irreducible representation as seen in the following examples(where $\psi$ is defined as in $Lemma \ 5$).

\begin{exam}
	Consider the irreducible representations of $S_3$ as in \textit{Example 3}.
	Then corresponding Plesken Lie algebra representations are:
	\begin{equation*}
		\begin{split}
			\psi_1 : \mathcal{L}(S_3) \to \mathfrak{gl}(\mathbb{R}) \text{ given by } \psi_1(a((1\ 2\ 3) - (1\ 3\ 2))) &= 0\\
			\psi_2 : \mathcal{L}(S_3) \to \mathfrak{gl}(\mathbb{R}) \text{ given by } \psi_2(a((1\ 2\ 3) - (1\ 3\ 2))) &= 0 \\
			\psi_3 : \mathcal{L}(S_3) \to \mathfrak{gl}(\mathbb{R}^2) \text{ given by } \psi_3(a((1\ 2\ 3) - (1\ 3\ 2))) &= a(\begin{pmatrix} \frac{-1}{2}& \frac{\sqrt{3}}{2} \\ \frac{\sqrt{3}}{2}&\frac{-1}{2}\end{pmatrix} - \begin{pmatrix} \frac{-1}{2}& \frac{\sqrt{3}}{2} \\ \frac{\sqrt{3}}{2}&\frac{-1}{2}\end{pmatrix}\\
			&=a(\begin{pmatrix}0&-\sqrt{3}\\ \sqrt{3}&0 \end{pmatrix})
		\end{split}
	\end{equation*}
	Clearly, $\psi_1$ and $\psi_2$ are irreducible representations since each of which has degree 1. We will prove $\psi_3$ is irreducible. For $W$ be a proper  invariant subspace of $\mathbb{R}^2$. Then $W = span\{(\alpha, \beta)\}$ for some $(\alpha, \beta) \in \mathbb{R}^2$. Since $W$ is invariant, $\psi_3(\hat{x})(w) \in W$ for all $\hat{x} \in \mathcal{L}(S_3)$ and $w \in W$.
	\begin{equation*}
		\begin{split}
			\psi_3(\hat{x})(w)\in W &\Rightarrow a(\begin{pmatrix}0&-\sqrt{3}\\ \sqrt{3}&0 \end{pmatrix})\begin{pmatrix}\alpha\\ \beta \end{pmatrix} \in W\\
			& \Rightarrow a(\begin{pmatrix}
				-\sqrt{3}\beta \\ \sqrt{3} \alpha
			\end{pmatrix}) \in W \\
			& \Rightarrow (-\sqrt{3}a\beta , \sqrt{3} a\alpha) = k(\alpha, \beta) \text{ for some $k \in \mathbb{R}$ }\\
			& \Rightarrow k = \frac{-\sqrt{3}a \beta}{\alpha} \text{ and } k = \frac{\sqrt{3}a \alpha}{\beta}
		\end{split}
	\end{equation*}
	By equating we get, $\sqrt{3}a(\alpha^2 + \beta^2) = 0$ which implies $\alpha = \beta =0$. That is, $W = 0$, hence $\psi_3$ is an irreducible representation.
\end{exam}
\begin{exam}
	Consider the dihedral group $D_4 = <a, b : a^4=b^2=e, aba = b >$ and its irreducible representations $\rho_1, \rho_2, \rho_3, \rho_4$ and $\rho_5$ where $\rho_i$'s are given by
	\begin{equation*}
		\begin{split}
			\rho_1 &: a \mapsto (1), b \mapsto (1) \\
			\rho_2 &: a \mapsto (-1), b \mapsto (1) \\
			\rho_3 &: a\mapsto (1), b \mapsto (-1) \\
			\rho_4 &: a \mapsto (-1), b \mapsto (-1) \\
			\rho_5 &: a\mapsto \begin{pmatrix}
				0&-1\\1&0
			\end{pmatrix},
			b \mapsto \begin{pmatrix}
				0&1\\1&0
			\end{pmatrix}
		\end{split}
	\end{equation*}
	Then corresponding Plesken Lie algebra representations are:
	\begin{equation*}
		\begin{split}
			\psi_1 &: \mathcal{L}(D_4) \to \mathfrak{gl}(\mathbb{R}) \text{ \ given by \ } \psi(\alpha(a - a^3)) = 0 \\
			\psi_2 &: \mathcal{L}(D_4) \to \mathfrak{gl}(\mathbb{R}) \text{ \ given by \ } \psi(\alpha(a - a^3)) = 0 \\
			\psi_3 &: \mathcal{L}(D_4) \to \mathfrak{gl}(\mathbb{R}) \text{ \ given by \ } \psi(\alpha(a - a^3)) = 0 \\
			\psi_4 &: \mathcal{L}(D_4) \to \mathfrak{gl}(\mathbb{R}) \text{ \ given by \ } \psi(\alpha(a - a^3)) = 0 \\
			\psi_5 &: \mathcal{L}(D_4) \to \mathfrak{gl}(\mathbb{R^2}) \text{ \ given by \ } \psi(\alpha(a - a^3)) = \alpha(\begin{pmatrix}
				0&-2\\2&0
			\end{pmatrix})
		\end{split}
	\end{equation*}
	Since $\psi_1,\psi_2, \psi_3$ and $\psi_4$ has degree 1, they are irreducible. Next we will prove $\psi_5$ is also an irreducible representation. For let $W$ be a proper invariant subspace of $\mathbb{R}^2$. Then $W = span{(\alpha, \beta)}$ for some  $(\alpha, \beta) \in \mathbb{R}^2$. Since $W$ is invariant, $\psi_5(\hat{x})(w) \in W$ for all $\hat{x} \in \mathcal{L}(D_4)$ and $w \in W$.
	\begin{equation*}
		\begin{split}
			\psi_5(\hat{x})(w)\in W &\Rightarrow \gamma (\begin{pmatrix}0&-2\\ 2&0 \end{pmatrix})\begin{pmatrix}\alpha\\ \beta \end{pmatrix} \in W\\
			& \Rightarrow \gamma (\begin{pmatrix}
				-2\beta \\ 2 \alpha 
			\end{pmatrix}) \in W \\
			& \Rightarrow (-2\gamma\beta , 2 \gamma\alpha) = k(\alpha, \beta) \text{ for some $k \in \mathbb{R}$ }\\
			& \Rightarrow k = \frac{-2\gamma \beta}{\alpha} \text{ and } k = \frac{2\gamma  \alpha}{\beta}
		\end{split}
	\end{equation*}
	By equating we get, $2\gamma(\alpha^2 + \beta^2) = 0$ which implies $\alpha = \beta =0$. That is, $W = 0$, hence $\psi_5$ is an irreducible representation.
\end{exam}
Also we obtained that the representations of $\mathcal{L}(S_4)$ corresponding to the irreducible representations of $S_4$ are irreducible.

However, the above situation is not a necessary and sufficient condition, because it is possible to have $\psi$ may be $\mathcal{L}(G)$-reducible even if $\rho$ is $\mathbb{F}G$-irreducible.

\subsection{Plesken Lie algebra modules}
Next we proceed to describes Plesken Lie algebra modules and obtain some interesting theories such as Schur's lemma.
\begin{defn}
	A vector space $V$, endowed with an operation $\mathcal{L}(G) \times V \to V$ is an $\mathcal{L}(G)$-module if
	\begin{enumerate}
		\item $(a \hat{x} + b \hat{y})v = a(\hat{x}v) + b(\hat{y}v)$
		\item $\hat{x}(av + bw) = a(\hat{x}v) + b(\hat{x}v)$
		\item $[\hat{x}, \hat{y}] = \hat{x} \hat{y} v - \hat{y}\hat{x} v$
	\end{enumerate}
	for all $\hat{x}, \hat{y} \in \mathcal{L}(G), v, w \in V$ and $ a, b \in \mathbb{F}$.
\end{defn}
\begin{rmk}
	Every $\mathbb{F}G$ is an $\mathcal{L}(G)$-module.
\end{rmk}
\begin{proof}
	Suppose $V$ is an $\mathbb{F}G$-module. Then for any $\hat{x} \in \mathcal{L}(G)$ and $v \in V$,
	\begin{center}
		$\hat{x}v = (\displaystyle\sum_{i=1}^n\hat{g_i})v = \displaystyle\sum_{i=1}^na_i(\hat{g_i}v)$
	\end{center}
	Since $V$ is an $\mathbb{F}G$-module, $gv \in V$ for all $g \in G$ and $v \in V$. Thus $\hat{g_i}v = g_iv - {g_i}^{-1}v \in V$ which implies $\hat{x}v \in V$ for all $\hat{x} \in \mathcal{L}(G)$ and $v \in V$ and this satisfies all the axioms of an $\mathcal{L}(G)$-module. Thus $V$ is an $\mathcal{L}(G)$-module.
\end{proof}

Note that the converse of the remark need not be true.
\begin{exam}
	Consider the group $G = <a : a^3=e> = \{e, a, a^2\}$ and the vector space $V = span\{v_1, v_2, v_3\}$ over the field $\mathbb{F}$. define the product $g_iv$ as follows : 
	\begin{center}
		$ev_1 = v_1 ,\quad  ev_2 = v_2,\quad ev_3 = v_3$ \\
		$av_1 = v_2,\quad av_2 = v_3 ,\quad av_3 = v_1$ \\
		$a^2v_1 = v_3,\quad a^2v_2 = -v_1,\quad a^2v_3 = v_2$
	\end{center}
\end{exam}
Then $V$ is an $\mathbb{F}G$-module and thus it is an $\mathcal{L}(G)$-module. Define
\begin{center}
	$U = span\{v_1 + v_2\}$
\end{center}
Then $U$ is a subspace of $V$. Since $a(v_1 + v_2) = v_2 + v_3 \notin U$, $U$ is not an $\mathbb{F}G$-submodule. \\
We have $\mathcal{L}(G) = \{ \alpha (a - a^2) : \alpha \in \mathbb{F} \}$. Then for any $\hat{x} = \alpha(a-a^2) \in \mathcal{L}(G)$, and $u=v_1 +v_2 \in U$, 
\begin{equation*}
	\begin{split}
		\alpha (a -a^2)(v_1+v_2) &= \alpha a(v_1+v_2) - \alpha a^2(v_1+v_2) \\
		&= \alpha (v_2+v_3-(v_3-v_1)) \\
		& = \alpha (v_2 + v_1)  \in U
	\end{split}         
\end{equation*}
Thus $U$ is an $\mathcal{L}(G)$-submodule of $V$.

\begin{thm}[Schur's lemma]
	Let $V$ and $W$ are irreducible $\mathcal{L}(G)$-modules.
	\begin{enumerate}
		\item[(1)] If $\theta : V \to W$ is an $\mathcal{L}(G)$-homomorphism, then either $\theta$ is an $\mathcal{L}(G)$-isomorphism or $\theta(v) = 0$ for all $v \in V$.
		\item[(2)] If $\theta : V \to V$ is an $\mathcal{L}(G)$-isomorphism, then $\theta(v) = \lambda_vI_V$.
	\end{enumerate}
\end{thm}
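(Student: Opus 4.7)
The plan is to follow the classical pattern for Schur's lemma, now interpreted in the category of $\mathcal{L}(G)$-modules. The central point is that, because the Plesken Lie algebra acts linearly on $V$ and $W$ and an $\mathcal{L}(G)$-homomorphism by definition intertwines those actions, the kernel and image of $\theta$ are automatically $\mathcal{L}(G)$-submodules. From there, irreducibility does all the work.

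For part (1), I would first verify that $\ker\theta \subseteq V$ is an $\mathcal{L}(G)$-submodule: if $v \in \ker\theta$ and $\hat{x} \in \mathcal{L}(G)$, then $\theta(\hat{x}v) = \hat{x}\theta(v) = 0$, so $\hat{x}v \in \ker\theta$, and linearity of $\theta$ shows $\ker\theta$ is a subspace. The same computation with the roles reversed shows $\mathrm{Im}\,\theta \subseteq W$ is an $\mathcal{L}(G)$-submodule. Irreducibility of $V$ forces $\ker\theta = 0$ or $\ker\theta = V$; irreducibility of $W$ forces $\mathrm{Im}\,\theta = 0$ or $\mathrm{Im}\,\theta = W$. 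If $\theta \neq 0$, then $\ker\theta \neq V$ and $\mathrm{Im}\,\theta \neq 0$, so $\theta$ is simultaneously injective and surjective, hence an $\mathcal{L}(G)$-isomorphism.

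For part (2), which I read as the assertion $\theta = \lambda \, I_V$ for some scalar $\lambda \in \mathbb{F}$, I would assume $\mathbb{F}$ is algebraically closed (or at least that the characteristic polynomial of $\theta$ splits; this is the standard hypothesis in Schur's lemma and is implicit whenever one writes the eigenvalue $\lambda$). Choose an eigenvalue $\lambda$ of the endomorphism $\theta$ of the finite-dimensional space $V$, and consider the map $\theta - \lambda \, I_V : V \to V$. It is an $\mathcal{L}(G)$-homomorphism since $\theta$ is one and $I_V$ trivially commutes with the action, and its kernel (the $\lambda$-eigenspace of $\theta$) is nonzero. Applying part (1) to $\theta - \lambda \, I_V$ with $W = V$, the only possibility compatible with a nonzero kernel is that $\theta - \lambda \, I_V$ is the zero map, so $\theta = \lambda \, I_V$.

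The routine verifications are the submodule checks for $\ker\theta$ and $\mathrm{Im}\,\theta$; the only delicate point is that part (2) genuinely requires a scalar eigenvalue to exist, so I would flag the algebraic-closedness hypothesis explicitly rather than suppress it, and I would read the printed notation $\theta(v) = \lambda_v I_V$ as shorthand for $\theta = \lambda \, I_V$ with a single scalar $\lambda$ independent of $v$, which is what the argument actually produces.
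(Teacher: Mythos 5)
Your proposal is correct and follows essentially the same route as the paper: for (1) the kernel and image are $\mathcal{L}(G)$-submodules and irreducibility forces the dichotomy, and for (2) an eigenvalue $\lambda$ gives a nonzero submodule $\ker(\theta-\lambda I_V)$ which must be all of $V$. Your explicit submodule verifications and your flagging of the algebraic-closedness hypothesis (the paper silently takes eigenvalues in $\mathbb{C}$) are welcome additions but do not change the argument.
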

\begin{proof}
	\begin{itemize}
		\item[(1)] Suppose $\theta(v) = 0$ for some $v \in V$. Then $Im(\theta) \neq \{0\}$. Since $Im(\theta)$ is a submodule of $W$  and $W$ is irreducible, $Im(\theta) = W$. Thus $\theta$ is onto. Since $Ker(\theta)$ is a submodule of $V$, $V$ is irreducible and $Ker(\theta) \subsetneq V$, $Ker(\theta) = \{0\}$. Thus $\theta$ is one-one. Hence $\theta$ is an isomorphism. 
		\item[(2)] Suppose $\theta : V \to V$ is an $\mathcal{L}(G)$-isomorphism. Then $\theta$ has an eigen value $\lambda \in \mathbb{C}$. Thus $Ker(\theta - \lambda I_V)\neq \{0\}$. $ Ker(\theta - \lambda I_V)$ is a non-zero $\mathcal{L}(G)$-submodule of $V$ and since $V$ is irreducible, $Ker(\theta - \lambda_v I_V) = V$. Hence, $\theta = \lambda I_V$.
	\end{itemize}

\end{proof}
\begin{thm}
	If $G$ is an abelian group, then every irreducible $\mathcal{L}(G)$-module has dimension 1.
\end{thm}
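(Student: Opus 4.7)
The plan is to mimic the classical argument that irreducible modules over an abelian (Lie) algebra are one dimensional, exploiting Schur's lemma that has just been established.

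First I would verify that when $G$ is abelian the Plesken Lie algebra $\mathcal{L}(G)$ is itself abelian. For basis elements $\hat{g}, \hat{h}$ one expands
\begin{equation*}
\hat g\hat h = gh - gh^{-1} - g^{-1}h + g^{-1}h^{-1}, \qquad \hat h\hat g = hg - hg^{-1} - h^{-1}g + h^{-1}g^{-1},
\end{equation*}
and since $G$ is abelian each of the four summands of $\hat g\hat h$ equals the corresponding summand of $\hat h\hat g$; hence $[\hat g,\hat h]=0$, and by bilinearity $[\hat x,\hat y]=0$ for every $\hat x,\hat y \in \mathcal{L}(G)$.

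Next, let $V$ be an irreducible $\mathcal{L}(G)$-module and fix $\hat{x}\in\mathcal{L}(G)$. Define $\theta_{\hat x}:V\to V$ by $\theta_{\hat x}(v)=\hat x v$; this is clearly linear by axiom (2) of Definition 3. I would then check that it is an $\mathcal{L}(G)$-homomorphism: for any $\hat y \in \mathcal{L}(G)$ and $v \in V$, axiom (3) gives $\hat x\hat y v - \hat y\hat x v = [\hat x,\hat y]v = 0$, so $\theta_{\hat x}(\hat y v)=\hat y\,\theta_{\hat x}(v)$, using the abelianness just established.

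Now I invoke Theorem 3 (Schur's lemma). By part (1), $\theta_{\hat x}$ is either identically zero or an $\mathcal{L}(G)$-isomorphism; in the zero case $\hat x$ acts on $V$ as the scalar $0$, and in the isomorphism case part (2) gives $\theta_{\hat x}=\lambda_{\hat x}I_V$ for some $\lambda_{\hat x}\in\mathbb{F}$. Either way, every element of $\mathcal{L}(G)$ acts on $V$ as a scalar multiple of the identity. Consequently, for any nonzero $v \in V$, the one-dimensional subspace $\mathbb{F}v$ is $\mathcal{L}(G)$-invariant; by irreducibility of $V$ this forces $V = \mathbb{F}v$, so $\dim V = 1$.

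The only delicate point is the appeal to part (2) of Schur's lemma, which tacitly requires the ground field to be algebraically closed so that the eigenvalue $\lambda$ exists (the statement in the paper uses $\mathbb{C}$); apart from this standing assumption the argument is routine, and the abelianness of $\mathcal{L}(G)$ computed in the first step is what makes Schur's lemma applicable to the single operators $\theta_{\hat x}$.
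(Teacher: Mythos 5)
Your proof is correct and follows essentially the same route as the paper: show $\mathcal{L}(G)$ is abelian, observe that multiplication by a fixed $\hat{x}$ is an $\mathcal{L}(G)$-module endomorphism of $V$, apply Schur's lemma to conclude every element acts as a scalar, and deduce that every subspace is invariant so $\dim V = 1$. Your version is slightly more careful than the paper's in explicitly handling the case $\theta_{\hat{x}} = 0$ (which part (2) of the stated Schur's lemma does not cover) and in flagging the need for an algebraically closed field, but these are refinements of the same argument rather than a different approach.
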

\begin{proof}
	Suppose $G$ is an abelian group. Then $\mathcal{L}(G)$ is an abelian Plesken Lie algebra. That is, $[\hat{x}, \hat{y}] = 0$ for all $\hat{x}, \hat{y} \in \mathcal{L}(G)$ which implies that $\hat{x}\hat{y} = \hat{y}\hat{x}$ for all $ \hat{x}, \hat{y} \in \mathcal{L}(G)$. Let $V$ be an irreducible $\mathcal{L}(G)$-module. Define $\phi : V \to V$ by 
	\begin{center}
		$\phi(v)  = v \hat{x}$
	\end{center}
	Then $\phi$ is linear and 
	\begin{center}
		$\phi(v\hat{y}) = (v\hat{y})\hat{x} = v(\hat{y}\hat{x}) = v(\hat{x}\hat{y}) = (v \hat{x})\hat{y}= \phi(v)\hat{y}$
	\end{center}
	Thus $\phi$ is an $\mathcal{L}(G)$-module homomorphism on $V$. Then by Schur's lemma, $\phi = \lambda I_V$ for some $\lambda \in \mathbb{C}$. That is, $v\hat{x} = \lambda v$ for all $ v \in V$. That is, every subspace of $V$ is an $\mathcal{L}(G)$-submodule of $V$. Since $V$ is irreducible, $dim(V) = 1$.	
\end{proof}

An $\mathcal{L}(G)$-module gives us many representations of Plesken Lie algebras, all of the form 
\begin{center}
	$\hat{x} \mapsto [\hat{x}]_{\mathfrak{B}} \quad ( \hat{x} \in \mathcal{L}(G) )$
\end{center}
for some $\mathfrak{B}$ of $V$. Also if $\phi : \mathcal{L}(G) \to \mathfrak{gl}(V)$ is a representation of $\mathcal{L}(G)$, then $V$ can be viewed as an $\mathcal{L}(G)$-module via the action $\hat{x}v = \phi(\hat{x})(v)$. That is, there is a one-to-one correspondence between $\mathcal{L}(G)$-modules and representations of the Plesken Lie algebra $\mathcal{L}(G)$.

\begin{thm}
	Let $\phi : \mathcal{L}(G) \to \mathfrak{gl}(V)$ be an irreducible representation. Then the only endomorphisms of $V$ commuting with all $\phi(\hat{x})$($\hat{x} \in \mathcal{L}(G)$) are the scalars.
\end{thm}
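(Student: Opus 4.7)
The plan is to leverage the one-to-one correspondence between representations of $\mathcal{L}(G)$ and $\mathcal{L}(G)$-modules noted just before the theorem, and then reduce the statement to a direct application of Schur's lemma. Because $\phi$ is irreducible as a representation, $V$ becomes an irreducible $\mathcal{L}(G)$-module under the action $\hat{x}\cdot v = \phi(\hat{x})(v)$, since an $\mathcal{L}(G)$-invariant subspace in the module sense is exactly a $\phi$-invariant subspace in the representation sense.

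Next, I would take an arbitrary endomorphism $\theta : V \to V$ which commutes with $\phi(\hat{x})$ for every $\hat{x} \in \mathcal{L}(G)$ and translate the commutation condition into the module language. Explicitly, for each $\hat{x}\in\mathcal{L}(G)$ and each $v\in V$,
\begin{equation*}
    \theta(\hat{x}\cdot v) = \theta(\phi(\hat{x})(v)) = \phi(\hat{x})(\theta(v)) = \hat{x}\cdot\theta(v),
\end{equation*}
so $\theta$ is an $\mathcal{L}(G)$-module homomorphism from $V$ to itself. This is the step where the hypothesis is used, and it is essentially bookkeeping.

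The main work is then to show that such a $\theta$ must be a scalar multiple of $I_V$. Since the underlying field carries eigenvalues (as in the proof of Schur's lemma above, we work over $\mathbb{C}$), $\theta$ admits an eigenvalue $\lambda$. Consider the endomorphism $\theta - \lambda I_V : V \to V$. It is again an $\mathcal{L}(G)$-module homomorphism, because $I_V$ trivially commutes with every $\phi(\hat{x})$ and module homomorphisms are closed under subtraction. Its kernel is nonzero, containing an eigenvector of $\theta$ for $\lambda$, and the kernel is an $\mathcal{L}(G)$-submodule of the irreducible module $V$; therefore by part (1) of Schur's lemma, $\theta - \lambda I_V$ is identically zero, giving $\theta = \lambda I_V$.

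The subtle point to watch will be to stay consistent with how Schur's lemma is stated earlier: part (2) there is phrased for isomorphisms, but the eigenvalue argument uses part (1) (via the kernel), so this phrasing issue does not obstruct us. The genuine hypothesis being exploited is algebraic closedness of the base field (guaranteeing an eigenvalue) together with irreducibility of $V$ as an $\mathcal{L}(G)$-module; everything else is a direct transcription between representations and modules.
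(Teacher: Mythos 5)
Your proposal is correct and follows essentially the same route as the paper: both arguments pick an eigenvalue $\lambda$ of the commuting endomorphism, observe that the corresponding eigenspace $\ker(\theta - \lambda I_V)$ is a nonzero $\mathcal{L}(G)$-submodule of the irreducible module $V$, and conclude it is all of $V$. Your phrasing via part (1) of Schur's lemma is a slightly cleaner packaging of the same computation the paper carries out directly on $V_\lambda$.
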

\begin{proof}
	Suppose $y \in \mathfrak{gl}(V)$ commutes with matrices in $\phi(\mathcal{L}(G))$.
	Let $\lambda$ be the eigen value of $y$ and $V_{\lambda}$, the eigen space of $y$ relative to $\lambda$. For any $\hat{x} \in \mathcal{L}(G)$,
	\begin{center}
		$y(\phi(\hat{x})v) = \phi(\hat{x})yv = \phi(\hat{x})\lambda v = \lambda (\phi(\hat{x})v)$
	\end{center}
	That is, $\phi(\hat{x})v \in V_{\lambda}$. We have $V_{\lambda} = Ker(y - \lambda I_V)$ is an $\mathcal{L}(G)$-submodule of $V$. Since $V$ is irreducible and $0 \neq \phi(\hat{x})v \in V_{\lambda}$, $V_{\lambda} = V$, which implies $y = \lambda$, a scalar matrix.
\end{proof}

\end{document}